\newtheorem{theorem}{Theorem}
\newtheorem{lemma}[theorem]{Lemma}
\newtheorem*{theoremA}{Theorem A}
\newcommand{\Q}{\mathbb{Q}}
\newcommand{\R}{\mathbb{R}}
\newcommand{\Z}{\mathbb{Z}}
\numberwithin{equation}{section}
\begin{document}

\author{C.~Gutierrez}
\address{Instituto de Ci\^encias Matem\'aticas e de Computa\c c\~ao, Universidade de S\~ao Paulo, S\~ao Carlos - SP, Brazil}
\email{gutp@icmc.usp.br}
\thanks{The first author was partially supported by FAPESP Grant 03/03107-9 and by CNPq Grants 470957/2006-9 and
306328/2006-2.}

\author{S.~Lloyd}
\address{School of Mathematics and Statistics, University of New South Wales, Sydney NSW, Australia}
\email{s.lloyd@unsw.edu.au}

\author{B.~Pires}
\address{Departamento de F\'isica e Matem\'atica, Faculdade de Filosofia, Ci\^encias e Letras da 
Universidade de S\~ao Paulo, Ribeir\~ao Preto - SP, Brazil}
\email{benito@ffclrp.usp.br}

\title[Affine IETs with flips and wandering intervals]
{Affine interval exchange transformations\\ 
with flips and wandering intervals}

\subjclass[2000]{Primary 37E05; 37E10 Secondary 37B }
\date{February 29 2008}

\begin{abstract}
There exist uniquely ergodic
affine interval exchange transformations of [0,1] with flips
having wandering intervals and such that the support of
the invariant measure is a Cantor set.
\end{abstract}

\maketitle

\section{Introduction}

Let $N$ be a compact subinterval of either $\R$ or the circle $S^1$, and let $f:N\to N$ be piecewise
continuous. We say that a subinterval $J\subset N$ is a {\it wandering interval} of the map $f$ if the forward iterates $f^n(J)$, $n=0,1,2,\ldots$ are pairwise disjoint intervals, each not reduced to a point, and 
the $\omega$-limit set of $J$ is an infinite set.

A great deal of information about the topological dynamics of a map $f:N\to N$ is revealed when one knows 
whether $f$ has wandering intervals. This turns out to be a subtle question whose answer depends on
both the topological and regularity properties of the map $f$.

The question of the existence of wandering intervals first arose when $f$ is a diffeomorphism of the circle
$S^1$. The Denjoy counterexample shows that even a
$C^1$ diffeomorphism $f:S^1\to S^1$ may have wandering intervals. This behaviour
is ruled out when $f$ is smoother. More specifically, if $f$ is a $C^1$ diffeomorphism of the circle such that the
logarithm of its derivative has bounded variation then $f$ has no wandering intervals \cite{Den}. In this case the topological dynamics of $f$ is simple: if $f$ has no periodic points, then $f$ is topologically conjugate to a rotation.

The first results ensuring the absence of wandering intervals on continous maps satisfying some smoothness conditions
were provided by Guckenheimer \cite{Guc}, Yoccoz \cite{Yoc}, and Blokh and Lyubich \cite{B-L}. Later on, de Melo et al.~\cite{MMS} generalised these results proving that if $N$ is compact and $f:N\to N$ is a $C^2$-map with non-flat critical points then $f$ has no wandering intervals.
Concerning discontinous maps, Berry and Mestel \cite{B-M} found a condition which excludes wandering intervals in Lorenz
maps --- interval maps with a single discontinuity. Of course, conservative maps and, in particular, interval exchange transformations, admit no wandering intervals. We consider the following generalisation of interval exchange transformations.

Let $0\le a<b$ and let $\{a,b\}\subset D\subset [a,b]$ be a discrete set containing $n$
points. We say that an injective, continuously differentiable map $T:[a,b]\to [a,b]$ defined
on $\mathcal{D}\,(T)=[a,b]\setminus D$ is an {\it affine interval exchange transformation of $n$-subintervals}, shortly an {\it $n$-AIET}, if $\vert DT\vert$ is a positive,
locally constant function such that $T([a,b])$ is all of $[a,b]$ except for finitely many points. We also assume
that the points in $D\setminus\{a,b\}$ are non-removable discontinuities of $T$. We say that an AIET
is {\it oriented} if $DT>0$, otherwise we say that $T$ has {\it flips}. An {\it isometric IET
of $n$ subintervals}, shortly an $n$-IET, is an $n$-AIET satisfying $\vert DT\vert=1$ everywhere.

Levitt \cite{Lev} found an example of a non-uniquely ergodic oriented AIET with wandering intervals. Therefore there are Denjoy counterexamples of arbitrary smoothness. Gutierrez and Camelier \cite{C-G} constructed an AIET with wandering intervals that is semiconjugate to a self-similar IET. The regularity of conjugacies between AIETs and self-similar IETs is examined by Cobo \cite{Cob} and by Liousse and Marzougui \cite{L-M}. Recently, Bressaud, Hubert and Maass \cite{BHM} provided sufficient conditions for a self-similar IET to have an AIET with a wandering interval semiconjugate to it. 

In this paper we present an example of a self-similar IET with flips having the particular
property that we can apply the main result of the work \cite{BHM} to obtain a $5$-AIET with flips
semiconjugate to the referred IET and having densely distributed wandering intervals. The AIET so obtained
is uniquely ergodic \cite{Ve1} (see \cite{Mas,Ve2}) and the support of the invariant measure is a Cantor set.

A few remarks are due in order to place this example in context. The existence of minimal non-uniquely ergodic AIETs with flips and wandering intervals would follow by the same argument of Levitt \cite{Lev},
provided we knew a minimal non-uniquely ergodic IET with flips. However, no example of minimal non-uniquely
ergodic IET with flips is known, although it is possible to insert flips in the example of Keane \cite{Kea} (for oriented IETs) to get a transitive non-uniquely IET with flips having saddle-connections. Computational evaluations indicate that it is impossible to obtain, via Rauzy induction, examples of self-similar $4$-IETs with flips meeting the hypotheses of \cite{BHM}, despite
this being possible in the case of oriented $4$-IETs (see \cite{C-G,Cob}). Thus the example we present here is the simplest possible, in the sense that wandering intervals do not occur for AIETs with flips semiconjugate to a self--similar IET, obtained via Rauzy induction, defined on a smaller number of intervals.

\section{Self-similar interval exchange transformations}

Let $T:[a,b]\to [a,b]$ be an $n $-AIET defined on $[a,b]\setminus D$,
where $D=\{x_0,\ldots,x_n\}$ and $a=x_0<x_1<\ldots<x_{n-1}<x_n=b$. 
Let $\beta_i\neq 0$ be the derivative of $T$ on $(x_{i-1},x_i)$, $i=1,2\ldots,n$. 
We shall refer to
$$ x=(x_0,x_1,\ldots,x_n) $$
as the {\it D-vector} of $T$ (i.e. the domain-of-definition-vector of
$T$). The vectors
\begin{eqnarray*}
\gamma = (\log |\beta_1|, \log |\beta_2|, \ldots, \log
|\beta_n|) \quad \mbox{and} \quad \tau = \left(\frac{\beta_1}{|\beta_1|},
\frac{\beta_2}{|\beta_2|},\ldots,\frac{\beta_n}{|\beta_n|}\right)
\end{eqnarray*}
will be called the {\it log-slope-vector} and the {\it flips-vector
of $T$}, respectively.  
Notice that $T$ has flips if and only if some coordinate of $\tau$  is equal to $-1$.
Let
\begin{eqnarray*}
\{z_1,\ldots,z_n\}=\left\{T\left(\frac{x_0+x_1}{2}\right),T\left(\frac{x_1+x_2}{2}\right),\ldots,T\left(\frac{x_{n-1}+x_n}{2}\right)\right\}
\end{eqnarray*}
be such that $ 0<z_1<z_2<\ldots<z_n<1$; we define the {\it permutation $\pi$
associated to $T$} as the one that takes $i\in\{1,2,\ldots,n\}$
to  $\pi(i)=j$  if and only if $z_{j}=T((x_{i-1}+x_i)/2)$.

It should be remarked that an AIET $E:[a,b]\to [a,b]$ with flips-vector $\tau\in\{-1,1\}^n$ and  which has the zero vector as the
log-slope-vector is an IET (with
flips-vector $\tau$) and conversely. Let $J=[c,d]$ be a proper subinterval of $[a,b]$.
We say that the IET $E$ is {\it self-similar} (on $J$) if
there exists an orientation preserving affine map $L:\R\to\R$ such
that $L(J)=[a,b]$  and $L\circ\widetilde{E}=E\circ L$, where $\widetilde{E}:J\to J$ denotes the IET induced by $E$ and $L(\mathcal{D}(\widetilde{E}))\subset\mathcal{D}(E)$. A self-similar IET $E:[a,b]\to [a,b]$ on a proper subinterval
$J\subset [a,b]$ will be denoted by $(E,J)$.

Given an AIET $E:[a,b]\to [a,b]$, the {\it orbit} of $p\in [a,b]$ is the set
$$O(p)=\{E^n(p)\mid n\in\Z \:,{\rm and}\: p\in\mathcal{D}(E)\}.$$
The AIET $E$ is called {\it transitive} if there exists an orbit of $E$ that is dense in $[a,b]$. We say that the
orbit of $p\in [a,b]$ is {\it finite} if $\#(O(p))<\infty$. In this way, a point $p\in [a,b]-
(\mathcal{D}(E)\cup\mathcal{D}(E^{-1}))$ is said to have a finite orbit. A transitive AIET is {\it minimal} if it has no finite orbits.

Let $E:[a,b]\to [a,b]$ be an IET  with  D-vector
$(x_0,x_1,\cdots,x_n) $. Denote by $J=[c,d]$ a proper
subinterval of $[a,b]$. Suppose that $E$ is self-similar (on $J$);
so there exists IET $\widetilde{E}:J\to J$  such that $L(J)=[a,b]$ and $L\circ\widetilde{E}=E\circ L$.
Given $i=0,1,\cdots,n$, let $y_i=L^{-1}(x_i)$. In this way,
the sequence of discontinuities of $\widetilde{E}$ is  $\{y_1,\cdots,y_{n-1}\}$.

We say that a non-negative matrix is {\it quasi-positive} if some power of it is a positive matrix. 
A non-negative matrix is quasi-positive if and only if it is both irreducible and aperiodic.
Let $A$ be an $n\times n$ non-negative matrix whose entries
are: 
\begin{eqnarray*}
A_{ji} = \#\{ 0 \leq k \leq N_i : E^k((y_{i-1}, y_i))
\subset (x_{j-1}, x_j) \},
\end{eqnarray*}
where $N_i$ is the least 
non-negative integer such that for some $y\in (y_{i-1},y_i)$ (and
therefore for all $y\in (y_{i-1},y_i)$), $E^{N_i+1}(y)\in J$. We
shall refer to $A$ as the {\it matrix associated to} $(E,J)$. 
Being self-similar, $E$ is also transitive, which implies the quasi-positivity of $A$.
Hence, by the Perron-Frobenius Theorem \cite{Gan}, $A$ possesses exactly one probability
right eigenvector $\alpha\in\Lambda_n$, where
$$\Lambda_{n}=\{\lambda=(\lambda_1,\ldots,\lambda_n)\mid\lambda_i>0,\,\forall i\}.$$
Moreover, the eigenvalue $\mu$ corresponding to $\alpha$
is simple, real and  greater than $1$ and, also,
all other eigenvalues of $A$ have absolute value less than
$\mu$. It was proved by Veech \cite{Ve1} (see also \cite{Mas,Ve2}) that every self-similar IET 
is minimal and uniquely ergodic. Furthermore,
following Rauzy \cite{Rau}, we conclude that
\begin{eqnarray*}
\alpha =
(x_1 - x_0, x_2 - x_1, \cdots, x_n - x_{n-1}).
\end{eqnarray*}

\section{The theorem of Bressaud, Hubert and Maass}

Let $A\in SL_n(\Z)$ and let $\Q[t]$ be the ring of polynomials with rational coefficients in one variable. We say that two
real eigenvalues $\theta_1$ and $\theta_2$ of $A$ are {\it conjugate} if there exists an irreducible polynomial
$f\in\Q[t]$ such that $f(\theta_1)=f(\theta_2)=0$. We say that an AIET  $T$ of $[0,1]$ is {\it semiconjugate}
(resp. {\it conjugate}) to an IET $E$ of $[0,1]$ if there exists a  non-decreasing (resp. bijective) continous map $h:[0,1]\to [0,1]$
such that $h(\mathcal{D}(T))\subset\mathcal{D}(E)$ and $E\circ h=h\circ T$. 

\begin{theorem}[Bressaud, Hubert and Maass, 2007]\label{BHMthm} Let $J$ be a proper subinterval of $[0,1]$, \mbox{$E:[0,1]\to [0,1]$} be an interval exchange transformation self-similar on $J$ and let $A$ be the matrix associated to $(E,J)$.
Let $\theta_1$ be the Perron-Frobenius eigenvalue of $A$. Assume that
$A$ has a real eigenvalue $\theta_2$ such that
\begin{itemize}
 \item [(1)] $1<\theta_2\: (<\theta_1)$;
 \item [(2)] $\theta_1$ and  $\theta_2$ are conjugate.
 \end{itemize}
Then there exists an affine interval exchange transformation $T$ of $[0,1]$ with wandering intervals that is semiconjugate
to $E$.
\end{theorem}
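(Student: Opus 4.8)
The plan is to realise $T$ as a Denjoy-type blow-up of $E$: I would collapse a dense, $E$-invariant orbit structure by inserting a wandering interval at each relevant orbit point, and take the semiconjugacy $h\colon[0,1]\to[0,1]$ to be the map that crushes every inserted interval back to the corresponding point of $[0,1]$. By construction $h$ is non-decreasing and continuous, $h(\mathcal{D}(T))\subset\mathcal{D}(E)$, and $E\circ h=h\circ T$; the real content of the theorem is that the lengths of the inserted intervals can be chosen so that (i) their total is finite, so the blown-up space is again a bounded interval which we normalise to $[0,1]$, and (ii) the resulting map $T$ is affine with $|DT|$ locally constant. Granting this, the wandering property is automatic: a single inserted interval $W$ is nondegenerate, its forward iterates $T^nW$ are the inserted intervals sitting over the forward $E$-orbit of the collapsed point and hence are pairwise disjoint, and its $\omega$-limit set is infinite because $E$, being self-similar, is minimal.

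Everything thus reduces to choosing the lengths. First I would encode the self-similarity of $E$ as a substitution whose incidence matrix is $A$, so that the geometric realisation of $E$ has interval-length vector equal to the Perron-Frobenius eigenvector $\alpha$ of $A$ for $\theta_1$. Here hypothesis (2) enters decisively: since $\theta_1$ and $\theta_2$ are roots of a common irreducible $f\in\Q[t]$, there is a field isomorphism $\sigma\colon\Q(\theta_1)\to\Q(\theta_2)$ with $\sigma(\theta_1)=\theta_2$, and because $A$ has integer entries, applying $\sigma$ coordinatewise to $\alpha$ produces a vector $v=\sigma(\alpha)$ satisfying $Av=\theta_2 v$. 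This Galois-conjugate eigenvector is the device that transports the arithmetic of $\alpha$ to the sub-dominant direction, and it is from $v$ that I would build the lengths of the wandering intervals, defining the length attached to each hierarchical cylinder from the $\theta_2$-weighted sums associated to $v$.

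The two inequalities in (1) are exactly what make these lengths usable. The bound $\theta_2<\theta_1$ forces the $\theta_2$-scale contributions to be dominated by the Perron-Frobenius $\theta_1$-scale, so that the total inserted length behaves like a convergent series of ratio $\theta_2/\theta_1<1$ and the blow-up fits inside a finite interval, giving (i). The bound $\theta_2>1$ guarantees that the associated cocycle expands rather than contracts, which yields strictly positive lengths; the inserted intervals are then nondegenerate, $h$ is onto but non-injective, and $T$ therefore genuinely possesses wandering intervals rather than being topologically conjugate to $E$. That $|DT|$ takes only finitely many values, so that $T$ is a bona fide AIET rather than a general $C^1$ Denjoy map, I would instead read off from the self-similar (substitutive) structure: the renormalisation $L$ conjugating $E$ to its induced map lifts to a self-similar structure for $T$, whose derivative is consequently governed by finitely many return slopes.

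The main obstacle is precisely (i)--(ii): showing that the family of lengths coming from $v$ is simultaneously strictly positive and absolutely summable while the induced $T$ has locally constant derivative. This is a delicate estimate on the $\theta_2$-weighted cocycle sums of the substitution, and it is here --- not in the soft construction of $h$ nor in the verification of the wandering property --- that both the strict inequalities $1<\theta_2<\theta_1$ and the conjugacy of $\theta_1$ and $\theta_2$ must be used in full force. A naive blow-up of a single orbit along a one-step slope cocycle runs into a mean-zero obstruction to summability, and the hierarchical, Galois-transported choice of lengths is what I expect to circumvent it.
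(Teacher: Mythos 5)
Your proposal is a blind reconstruction of the argument of Bressaud--Hubert--Maass, and it is worth noting at the outset that the paper itself does not reprove this theorem: its proof is a citation of \cite{BHM}, where the result is established for oriented IETs, together with one observation --- that the proof goes through word for word when $E$ has flips, the AIET $T$ inheriting its flips from $E$ through the semiconjugacy. That observation, which is the entire content of the paper's proof, is absent from your write-up: you never discuss orientation-reversing branches at all. Your outline does capture the right general shape of the BHM construction (a Denjoy-type blow-up along an orbit, with the semiconjugacy collapsing the inserted intervals, and a Galois conjugation $\sigma:\Q(\theta_1)\to\Q(\theta_2)$ transporting Perron--Frobenius data to the $\theta_2$-eigenspace), but as a proof it has a genuine gap, and the mechanism you describe would fail if taken literally.

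The gap is that you defer precisely the statement being proved: as you say yourself, constructing $h$ and checking the wandering property are soft, and everything rests on the summability/affineness of the inserted lengths, which you leave as ``a delicate estimate'' you ``expect'' to hold. That estimate is the theorem, and here your setup points the wrong way. For $T$ to be affine with log-slope vector $w$, the gap inserted at $E^n(x_0)$ is forced to have length $\ell_n=\ell_0\exp(S_n)$, where $S_n$ is the Birkhoff sum of the function equal to $w_j$ on $(x_{j-1},x_j)$; positivity is therefore automatic and is not what $\theta_2>1$ buys. The sums over the level-$k$ renormalisation towers equal $((A^{t})^{k}w)_i$, so the vector that must be a $\theta_2$-eigenvector is a \emph{left} eigenvector of $A$ (naturally obtained as the Galois conjugate of the left Perron--Frobenius eigenvector), not your $v=\sigma(\alpha)$, which satisfies $Av=\theta_2v$ on the wrong side. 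With the correct choice, $S_n\asymp -c\,n^{\delta}$ along a suitably chosen orbit, $\delta=\log\theta_2/\log\theta_1$, so the $\sim\theta_1^{k}$ gaps of level $k$ have lengths $\sim e^{-c\theta_2^{k}}$ and the total converges \emph{because} $\theta_2>1$; the inequality $\theta_2<\theta_1$ is automatic from Perron--Frobenius (hence the parentheses in the statement) and carries no load. Your ``convergent series of ratio $\theta_2/\theta_1<1$'' picture is in fact backwards: if the gap lengths were assigned linearly from $v$, scaling like $\theta_2^{-k}$ at level $k$, the total $\sum_k \theta_1^{k}\theta_2^{-k}$ would diverge precisely because $\theta_2<\theta_1$. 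Establishing the two-sided lower bound $S_n\le -c\,n^{\delta}+C$ --- the choice of the blown-up point, the prefix--suffix decomposition of its orbit, and the nonvanishing of the relevant pairings, which is where the conjugacy of $\theta_1$ and $\theta_2$ is genuinely used --- is the substance of \cite{BHM} and is missing from your proposal.
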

\proof This theorem was proved in \cite{BHM} for oriented IETs. The same proof holds word for word for IETs
with flips. In this case, the AIET $T$ inherits its flips from the IET $E$ through the semiconjugacy
previously constructed therein.
\endproof

\section{The interval exchange transformation $E$}

In this section we shall present the IET we shall use to construct the AIET with flips and wandering intervals.
We shall need the Rauzy induction \cite{Rau} to obtain a minimal, self-induced IET whose associated matrix satisfies
all the hypotheses of Theorem \ref{BHMthm}.

Let $ \alpha = (\alpha_1, \alpha_2, \alpha_3, \alpha_4, \alpha_5)\in\Lambda_5$ 
be the probability (i.e. each $ \alpha_i >0$ and $ |\alpha| = \alpha_1 +  \alpha_2 + \alpha_3 + \alpha_4 +
\alpha_5 = 1 $ ) Perron-Frobenius right eigenvector of the matrix
\begin{eqnarray*}
A = \left(\begin{array} {ccccc}
         2 & 4 & 6 & 5 & 2 \\
         0 & 2 & 1 & 1 & 1 \\
         0 & 0 & 3 & 2 & 0 \\
         1 & 2 & 2 & 2 & 1 \\
         1 & 3 & 5 & 4 & 2
 \end{array}\right).
\end{eqnarray*}
The eigenvalues $\theta_1,\theta_2,\rho_1,\rho_2,\rho_3$ of $A$  are
real and have approximate values:
\begin{eqnarray*}
\theta_1=7.829,\:\theta_2=1.588,\:\rho_1=1,\:\rho_2=0.358,\:\rho_3=0.225
\end{eqnarray*}
and $ \alpha=(\alpha_1,\alpha_2,\alpha_3,\alpha_4,\alpha_5)$, the probability right
eigenvector associated to $\theta_1$, has approximate value
\begin{eqnarray*}
\alpha = (0.380, 0.091, 0.070, 0.170, 0.289).
\end{eqnarray*}
Notice that $\alpha_1+\alpha_2+\alpha_3+\alpha_4+\alpha_5=1$. In what follows we represent
a permutation $\pi$ of the set $\{1,2,\ldots,n\}$ by the $n$-tuple $\pi=(\pi(1),\pi(2),\ldots,
\pi(n))$.

We consider the iet $E:[0, 1]\to [0, 1]$  which is
determined by the following conditions:

\begin{enumerate}
  \item [(1)] $E$ has the D-vector $ x = (x_0, x_1, x_2, x_3, x_4, x_5)$,
  where
  $$
  x_0=0;\quad x_i=\sum_{k=1}^i \alpha_k,\quad i=1,\ldots,5;
  $$
  \item [(2)] $E$ has associated permutation $(5,3,2,1,4)$;
  \item [(3)] $E$ has flips-vector $(-1,-1,1,1,-1)$.
\end{enumerate}

\begin{lemma}\label{example}
The map $E$  is self-similar on the interval $J=[0, 1/\theta_1]$,  and $A$
is precisely the matrix associated to $(E,J)$.
\end{lemma}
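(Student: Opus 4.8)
The plan is to verify both assertions at once by explicitly computing the first-return (induced) map of $E$ to $J=[0,1/\theta_1]$ and checking that, after rescaling, it reproduces $E$ together with the matrix $A$. Since the affine map $L$ in the definition of self-similarity is orientation-preserving with $L(J)=[0,1]$, it is forced to be $L(x)=\theta_1 x$, so the discontinuities of $\widetilde{E}$ are $y_i=L^{-1}(x_i)=x_i/\theta_1$ for $i=0,\dots,5$. Because $1/\theta_1<x_1$, all of $J$ lies inside the first interval $(x_0,x_1)$, so every base interval $(y_{i-1},y_i)$ begins its orbit there. First I would write $E$ down branch by branch: from the permutation $(5,3,2,1,4)$ the image intervals occur, left to right, in the order $E(I_4),E(I_3),E(I_2),E(I_5),E(I_1)$, and the flips-vector $(-1,-1,1,1,-1)$ fixes the slope-sign of each branch, yielding five affine formulas $E(x)=\pm x+c_i$ with constants $c_i$ expressed through the partial sums of $\alpha$.

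Next, for each $i=1,\dots,5$ I would iterate $E$ on $(y_{i-1},y_i)$, at each level recording the interval $(x_{j-1},x_j)$ that contains the current image, until the image first returns to $J$ at time $N_i+1$. The sequence of indices $j$ visited for $0\le k\le N_i$ is the itinerary of tower $i$; counting the occurrences of each $j$ gives the $i$-th column of a matrix, whose column sum equals the return time $N_i+1$. The composed return map $E^{N_i+1}$ restricted to $(y_{i-1},y_i)$ is an isometry whose slope-sign is the product of the flips $\tau_j$ along the itinerary and whose image is a subinterval of $J$. I would then check that these data reproduce exactly the matrix $A$ of the statement, and that the five return branches assemble into an IET $\widetilde{E}$ on $J$ whose rescaling $L\circ\widetilde{E}\circ L^{-1}$ has D-vector $x$, permutation $(5,3,2,1,4)$ and flips $(-1,-1,1,1,-1)$, hence equals $E$. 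Equality of two IETs with identical D-vector and combinatorial data yields $L\circ\widetilde{E}=E\circ L$ and $L(\mathcal{D}(\widetilde{E}))\subset\mathcal{D}(E)$, which is the self-similarity, while the itinerary count is by definition the associated matrix. A useful internal check is the identity $A\alpha=\theta_1\alpha$, reflecting that $E$ preserves length and that the five towers tile $[0,1]$.

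The main obstacle is book-keeping together with the need for \emph{exact}, not merely approximate, arithmetic: at each of the $4+11+17+14+6=52$ levels one must confirm that the current image interval lies \emph{entirely} inside a single $(x_{j-1},x_j)$, with no discontinuity of $E$ in its interior, since otherwise the branch would split and the induction would not close up on five intervals. To make this rigorous rather than numerical I would run the whole computation in the field $\Q(\theta_1)$, in which $\alpha$ and all the $y_i$ lie, so that each containment reduces to a verifiable inequality between explicit elements of $\Q(\theta_1)$; equivalently, it suffices to follow the finitely many orbit segments of the endpoints $y_0,\dots,y_5$ and verify that no $x_j$ ever falls strictly interior to an image interval before its return to $J$.
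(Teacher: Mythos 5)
Your proposal is correct in outline and would prove the lemma, but it takes a genuinely different route from the paper on the self-similarity half. You propose a single-shot computation of the first-return map of $E$ to $J=[0,1/\theta_1]$: iterate each $(y_{i-1},y_i)$ through its full tower (about $52$ levels in all), verify in exact arithmetic in $\Q(\theta_1)$ that no image ever straddles a discontinuity, read off the itineraries to get the associated matrix, and check that the assembled return branches, rescaled by $L(x)=\theta_1 x$, reproduce the length data, the permutation $(5,3,2,1,4)$ and the flips-vector $(-1,-1,1,1,-1)$ of $E$. The paper instead gets self-similarity from Rauzy induction: it applies the Rauzy step fourteen times, recording at each step only the signed permutation $p^{(k)}$ and the type $t^{(k)}$ (a single comparison of two lengths); since $p^{(14)}=p^{(0)}$ and the product of the fourteen elementary Rauzy matrices equals $A$, the length vector scales exactly by $1/\theta_1$ (because $\alpha$ is the Perron eigenvector), so the induced map on $[0,1/\theta_1]$ is automatically a scaled copy of $E$. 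What the paper's route buys is economy and built-in rigor: the general Rauzy machinery guarantees that each induced map is again a $5$-IET with an explicit combinatorial update, so the delicate ``no image straddles a discontinuity'' checks that dominate your bookkeeping never arise; only fourteen length comparisons are needed. What your route buys is self-containedness --- no Rauzy formalism or elementary matrices $M(p,t)$ from the cited literature --- and both conclusions fall out of one computation; the cost is that all rigor rests on the roughly fifty exact containment verifications, which you correctly identify as the crux and handle via $\Q(\theta_1)$-arithmetic or endpoint-orbit tracking. For the second assertion (that $A$ is the matrix associated to $(E,J)$) the two proofs coincide: the paper also computes the itineraries of the intervals $(y_{i-1},y_i)$ and counts occurrences (its Table 2 is exactly the computation you describe), and your consistency check $A\alpha=\theta_1\alpha$ is the same tower-tiling identity implicit there. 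One small caution: fix a single convention for the return time ($N_i$ versus $N_i+1$) when counting tower levels, since the column sums of $A$ must equal the number of levels $k$ with $0\le k<\mathrm{(return\ time)}$ for the identity $A\alpha=\theta_1\alpha$ to hold.
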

\begin{proof}
We apply the Rauzy algorithm (see [Rau]) to the IET $E$. We represent $E:I\to I$ by the pair $E^{(0)}=(\alpha^{(0)},p^{(0)})$ where $\alpha^{(0)}=\alpha$ is its length vector and $p^{(0)}=(-5,-3,2,1,-4)$ is its signed permutation, obtained by elementwise multiplication of its permutation $(5,3,2,1,4)$ and flips-vector $(-1,-1,1,1,-1)$. We shall apply the Rauzy procedure fourteen times, obtaining IETs $E^{(k)}=(\alpha^{(k)},p^{(k)})$, $k=0,\ldots,14$, with D-vector $x^{(k)}$ given by $x^{(k)}_0=0$; and $x^{(k)}_i=\sum_{j=1}^i \alpha^{(k)}_j$,
for $i=1,2,\ldots,5$.

\begin{table}[htbp]
	\centering
		\begin{tabular}{|c| *{4}{r @{\:}}r|c|}
\hline
 $k$ && \multicolumn{3}{c}{$p^{(k)}$} && $t^{(k)}$ \\
\hline
 0&-5&-3& 2& 1&-4& 1\\
 1& 4&-5&-3& 2& 1& 0\\
 2& 5&-2&-4& 3& 1& 1\\
 3& 5& 1&-2&-4& 3& 1\\
 4& 5& 3& 1&-2&-4& 1\\
 5& 5&-4& 3& 1&-2& 0\\
 6&-2&-5& 4& 1&-3& 1\\
 7&-2& 3&-5& 4& 1& 0\\
 8&-3& 4&-2& 5& 1& 1\\
 9&-3& 4&-2& 5& 1& 1\\
10&-3& 4&-2& 5& 1& 0\\
11&-4& 5&-3& 2& 1& 1\\
12&-4& 5& 1&-3& 2& 1\\
13&-4& 5& 2& 1&-3& 0\\
14&-5&-3& 2& 1&-4& 1\\
\hline
		\end{tabular}
	\caption{Rauzy cycle with associated matrix $A$.}
	\label{tab:Table}
\end{table}

Given an IET $E^{(k)}$, defined on an interval $[0,L^{(k)}]$ and represented by the pair $(\alpha^{(k)},p^{(k)})$, the IET $E^{(k+1)}$ is defined to be the map induced on the interval $[0,L^{(k+1)}]$ by $E^{(k)}$, where $L^{(k+1)}=L^{(k)}-\min\,\{\alpha^{(k)}_5,\alpha^{(k)}_{s}\}$ and $s$ is such that
$\vert p^{(k)}_{\,n}(s)\vert=5$. We say that the type $t^{(k)}$ of $E^{(k)}$ is $0$ if $\alpha^{(k)}_5>\alpha^{(k)}_s$ and $1$ if $\alpha^{(k)}_5<\alpha^{(k)}_{s}$. Notice that $\sum_{i=1}^5 \alpha^{(k)}_i=L^{(k)}$.

The new signed permutations $p^{(k)}$, obtained by this procedure are given in Table~\ref{tab:Table}, along with the type $t^{(k)}$ of $E^{(k)}$. The length vector $\alpha^{(k+1)}$ is obtained from $\alpha^{(k)}$ by the equation $\alpha^{(k)}=M(p^{(k)},t^{(k)}).\alpha^{(k+1)}$, where $M(p^{(k)},t^{(k)})\in SL_n(\Z)$ is a certain elementary matrix (see \cite{GLMPZ}). Moreover, we have that
$$
M(p^{(0)},t^{(0)}).\cdots.M(p^{(13)},t^{(13)})=A.
$$
Thus $\alpha^{(14)}=A^{-1}.\alpha^{(0)}=\alpha^{(0)}/\theta_1$, and $J=[0,L^{(14)}]$. Notice that $p^{(14)}=p^{(0)}$, and so we have a Rauzy cycle: $R^{(14)}$ and $R^{(0)}$ have the same flips-vector and permutation. Hence $\widetilde{E}=E^{(14)}$ is a $1/\theta_1$-scaled copy of $E=E^{(0)}$, and so $E$ is self-similar on the interval $J$.

As remarked before, since $E$ self-similar, we have that the matrix associated to $(E,J)$
is quasi-positive. In fact, we have that $A$ is the matrix associated to $(E,J)$. To see that, 
for $i\in\{0,\ldots,5\}$, let $y_i=x_i/\theta_1$ be the points of discontinuity for $\widetilde{E}$. Table~\ref{tab:itin} shows the itinerary $I(i)=\{I(i)_k\}_{k=1}^{N_i}$ of each interval $(y_{i-1},y_i)$, where \mbox{$N_i=\min\,\{n>1:E^{n+1}((y_{i-1},y_i))\subset J\}$} and $I(i)_k=r$ if and only if $E^k((y_{i-1},y_i))\subset (x_{r-1},x_r)$.

\begin{table}[htbp]
	\centering
\begin{tabular}{|c|c| *{17}{c@{\:}}|}
\hline
 $i$ & $N_i$ &&\multicolumn{14}{c}{$I(i)$} && \\
\hline
 1& 4& 1& 5& 1& 4&  &  &  &  &  &  &  &  &  &  &  &  &   \\
 2&11& 1& 5& 2& 1& 4& 1& 5& 2& 1& 5& 4&  &  &  &  &  &   \\
 3&17& 1& 5& 2& 1& 4& 1& 5& 3& 1& 5& 3& 1& 5& 3& 1& 5& 4 \\
 4&14& 1& 5& 2& 1& 4& 1& 5& 3& 1& 5& 3& 1& 5& 4&  &  &   \\
 5& 6& 1& 5& 2& 1& 5& 4&  &  &  &  &  &  &  &  &  &  &   \\
\hline
\end{tabular}
	\caption{Itineraries $I(i)$, $i\in\{1,\ldots,5\}$.}
	\label{tab:itin}
\end{table}

The number of times that $j$ occurs in $I(i)$, for $i,j\in\{1,\ldots,5\}$, is precisely $A_{ji}$ and thus $A$ is the matrix associated to the pair $(E,J)$ as required.
\end{proof}

\begin{theoremA} There exists a uniquely ergodic
affine interval exchange transformation of $[0,1]$ with flips
having wandering intervals and such that the support of
the invariant measure is a Cantor set.
\end{theoremA}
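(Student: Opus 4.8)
The plan is to assemble Theorem A by combining the three results already in hand: Lemma \ref{example}, which realizes the explicit matrix $A$ as the matrix associated to the self-similar IET $(E,J)$ with flips; Theorem \ref{BHMthm}, which manufactures a semiconjugate AIET $T$ with wandering intervals from any self-similar IET whose associated matrix has two conjugate eigenvalues both exceeding $1$; and Veech's theorem, quoted in Section 2, guaranteeing that a self-similar IET is minimal and uniquely ergodic. First I would verify the numerical hypotheses of Theorem \ref{BHMthm} for the matrix $A$ of Section 4. The Perron--Frobenius eigenvalue is $\theta_1\approx 7.829$, and the candidate second eigenvalue is $\theta_2\approx 1.588$; condition (1) reads $1<\theta_2<\theta_1$, which is immediate from the listed values. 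For condition (2) I must check that $\theta_1$ and $\theta_2$ are conjugate, i.e.\ that they are roots of a common irreducible polynomial in $\Q[t]$. Since $A$ has integer entries, its characteristic polynomial lies in $\Z[t]$; factoring it reveals that $\rho_1=1$ is a rational root and the remaining quartic factors, and I would confirm that $\theta_1,\theta_2,\rho_2,\rho_3$ are the four roots of a single irreducible quartic over $\Q$ (the product $\theta_1\theta_2\rho_2\rho_3$ and the other symmetric functions being rational but the polynomial admitting no rational root and no factorization into rational quadratics). This is the step where the arithmetic must be done honestly.

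Granting the two hypotheses, Theorem \ref{BHMthm} applies directly to $(E,J)$ and produces an AIET $T$ of $[0,1]$, semiconjugate to $E$ via a non-decreasing continuous $h$, with wandering intervals. Because $E$ carries the nontrivial flips-vector $(-1,-1,1,1,-1)$ and the construction of Theorem \ref{BHMthm} transmits the flips of $E$ to $T$ through the semiconjugacy (as noted in its proof), the resulting $T$ is genuinely an AIET \emph{with flips}, not merely an oriented one. This is precisely why the flipped version of the Bressaud--Hubert--Maass theorem was recorded earlier in the excerpt.

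It remains to establish unique ergodicity of $T$ and to identify the support of its invariant measure as a Cantor set. For unique ergodicity I would argue through the semiconjugacy: $E$ is self-similar, hence minimal and uniquely ergodic by Veech, with its unique invariant measure being Lebesgue measure (whose coordinatewise masses are the eigenvector $\alpha$). The non-decreasing map $h$ pushes forward any $T$-invariant measure to an $E$-invariant measure, and by collapsing the wandering intervals and their orbit-closures $h$ transports the uniqueness upstairs; the standard argument shows that any $T$-invariant probability measure must be the pullback of Lebesgue measure on $E$, forcing unique ergodicity of $T$. The support is then the complement in $[0,1]$ of the (countable, dense) union of the orbits of the wandering intervals; since these wandering intervals are densely distributed, removing them and passing to the support leaves a closed set with empty interior and no isolated points, i.e.\ a Cantor set. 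The main obstacle I anticipate is not the Cantor-set bookkeeping but condition (2): verifying that $\theta_1$ and $\theta_2$ are algebraic conjugates requires the irreducibility of the quartic factor of $\det(A-tI)$ over $\Q$, and this algebraic fact—rather than anything dynamical—is the crux on which the entire construction depends.
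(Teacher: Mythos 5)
Your proposal follows essentially the same route as the paper: check hypothesis (1) of Theorem \ref{BHMthm} from the listed eigenvalues, check hypothesis (2) by factoring the characteristic polynomial and confirming the quartic factor is irreducible over $\Q$ (the paper exhibits this factorization explicitly as $p(t)=(1-t)(1-8t+18t^2-10t^3+t^4)$, the one computation you deferred), and then apply the flips version of the Bressaud--Hubert--Maass theorem to $(E,J)$. Your additional sketches of unique ergodicity via the semiconjugacy and of the Cantor-set support are consistent with the paper, which disposes of those claims by citing Veech and a closing remark rather than arguing them in the proof.
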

\begin{proof} By construction, the matrix $A$ associated to $(E,J)$ satisfies hypothesis $(1)$
of \mbox{Theorem \ref{BHMthm}}. The characteristic polynomial $p(t)$ of $A$ can be written as the
product of two irreducible polynomials over $\Q[t]$: 
$$p(t)=(1-t) (1-8t+18t^2-10t^3+t^4).$$
Thus the eigenvalues $\theta_1$ and $\theta_2$ are zeros of the same irreducible polynomial
of degree four and so are conjugate. Hence, $A$ also verifies hypothesis $(2)$ of \mbox{Theorem \ref{BHMthm}},
which finishes the proof.
\end{proof}

Note that for an AIET $T$, the forward and backward iterates of a wandering interval $J$ form a pairwise disjoint collection of intervals. Moreover, when $T$ is semiconjugate to a transitive IET, as is the case in Theorem A, the $\alpha$-limit set and $\omega$-limit set of $J$ coincide. 

\bibliographystyle{amsplain}

\end{document}